\documentclass[11pt]{amsart}
\usepackage{amsmath}
\usepackage{amsmath,amsfonts,amssymb,amsthm}
\usepackage[active]{srcltx}

\setcounter{MaxMatrixCols}{10}

\newtheorem{theorem}{Theorem}

\newtheorem{remark}{Remark}
\newtheorem{definition}{Definition}
\newtheorem{corollary}{Corollary}

\newtheorem*{Sahakian}{Lemma S}
\newtheorem*{Goginava-Sahakian}{Theorem GS}

\newtheorem*{Sah}{Theorem WS}
\newtheorem*{Bakh}{Theorem B}

\begin{document}
\author{Ushangi Goginava and Artur Sahakian}
\title[Convergence of multiple Fourier series]{Convergence of multiple
Fourier series of functions of bounded generalized variation}
\address{U. Goginava, Department of Mathematics, Faculty of Exact and
Natural Sciences, Iv. Javakhishvili Tbilisi State University, Chavcha\-vadze
str. 1, Tbilisi 0128, Georgia}
\email{zazagoginava@gmail.com}
\address{A. Sahakian, Yerevan State University, Faculty of Mathematics and
Mechanics, Alex Manoukian str. 1, Yerevan 0025, Armenia}
\email{sart@ysu.am}
\maketitle

\begin{abstract}
The paper introduces a new concept of $\Lambda $-variation of multivariable
functions and investigates its connection with the convergence of
multidimensional Fourier series
\end{abstract}

\footnotetext{%
2010 Mathematics Subject Classification: 42B05
\par
Key words and phrases: Multiple Fourier series, Bounded $\Lambda $%
-variation, Uniform convergence .}

\section{Classes of Functions of Bounded Generalized Variation}

In 1881 Jordan \cite{Jo} introduced a class of functions of bounded
variation and applied it to the theory of Fourier series. Hereafter this
notion was generalized by many authors (quadratic variation, $\Phi $%
-variation, $\Lambda $-variation ets., see \cite{Ch,Wi,W,M}). In two
dimensional case the class BV of functions of bounded variation was
introduced by Hardy \cite{Ha}.

For an interval $T=[a,b]\subset R$ we denote by $T^d=[a,b]^d$ the
d-dimensional cube in $R^d$.

Consider a function $f\left( x\right) $ defined on $T^{d}$ and a collection
of intervals
\begin{equation*}
J^{k}=\left( a^{k},b^{k}\right) \subset T,\qquad k=1,2,\ldots d.
\end{equation*}
For $d=1$ we set
\begin{equation*}
f\left( J^{1}\right) :=f\left( b^{1}\right) -f\left( a^{1}\right) .
\end{equation*}%
If for any function of $d-1$ variables the expression $f\left( J^{1}\times
\cdots \times J^{d-1}\right) $ is already defined, then for a function of $d$
variables the \textit{mixed difference} is defined as follows:
\begin{equation*}
f\left( J^{1}\times \cdots \times J^{d}\right) :=f\left( J^{1}\times \cdots
\times J^{d-1},b^{d}\right) -f\left( J^{1}\times \cdots \times
J^{d-1},a^{d}\right) .
\end{equation*}

Let $E=\{I_{k}\}$ be a collection of nonoverlapping intervals from $T$
ordered in arbitrary way and let $\Omega=\Omega(T) $ be the set of all such
collections $E$. We denote by $\Omega _{n}=\Omega_{n}(T)$ set of all
collections of $n$ nonoverlapping intervals $I_{k}\subset T.$

For sequences of positive numbers
\begin{equation*}
\Lambda ^{j}=\{\lambda _{n}^{j}\}_{n=1}^{\infty },\quad
\lim_{n\to\infty}\lambda^j_n=\infty,\quad j=1,2,\ldots ,d,
\end{equation*}
and for a function $f(x)$, $x=(x_1,\ldots,x_d)\in T^d$ the $\left( \Lambda
^{1},\ldots ,\Lambda ^{d}\right) $-\textit{variation of $f$ with respect to
the index set }$D:=\{1,2,...,d\}$ is defined as follows:
\begin{equation*}
\left\{ \Lambda ^{1},\ldots ,\Lambda ^{d}\right\} V^{D}\left( f,T^{d}\right)
:=\sup\limits_{\{I_{i_{j}}^{j}\}\in \Omega }\ \sum\limits_{i_{1},...,i_{d}}%
\frac{\left\vert f\left( I_{i_{1}}^{1}\times \cdots \times
I_{i_{d}}^{d}\right) \right\vert }{\lambda^1 _{i_{1}}\cdots \lambda^d
_{i_{d}}}.
\end{equation*}

For an index set $\alpha =\{j_{1},...,j_{p}\}\subset D$ and any $x=\left(
x_{1},...,x_{d}\right) \in R^{d}$ we set ${\widetilde{\alpha }}:=D\setminus
\alpha $ and denote by $x_{\alpha }$ the vector of $R^{p}$ consisting of
components $x_{j},j\in \alpha $, i.e.
\begin{equation*}
x_{\alpha }=\left( x_{j_{1}},...,x_{j_{p}}\right) \in R^{p}.
\end{equation*}

By
\begin{equation*}
\left\{ \Lambda ^{j_{1}},...,\Lambda ^{j_{p}}\right\} V^{{\alpha }}\left(
f,x_{\widetilde{\alpha }},T^d\right) \quad \text {and}\quad f\left(
I_{i_{j_{1}}}^{1}\times \cdots \times I_{i_{j_{p}}}^{p},x_{\widetilde{\alpha
}}\right)
\end{equation*}
we denote respectively the $\left( \Lambda ^{j_{1}},...,\Lambda
^{j_{p}}\right) $-variation over the $p$-dimensional cube $T^{p}$ and mixed
difference of $f$ as a function of variables $x_{^{j_{1}}},...,x_{j_{p}}$
with fixed values $x_{^{\widetilde{\alpha }}}$ of other variables. The $%
\left( \Lambda ^{j_{1}},...,\Lambda ^{j_{p}}\right) $\textit{-variation of }$%
f$\textit{\ with respect to the index set} ${\alpha }$ is defined as
follows:
\begin{equation*}
\left\{ \Lambda ^{j_{1}},...,\Lambda ^{j_{p}}\right\} V^{{\alpha }}\left(
f,T^{p}\right) =\sup\limits_{x_{^{{\widetilde{\alpha }}}}\in T^{d-p}}\left\{{%
\Lambda ^{j_{1}},...,\Lambda ^{j_{p}}}\right\}V^{{\alpha }}\left( f,x_{^{%
\widetilde{\alpha }}},T^{d}\right) .
\end{equation*}

\begin{definition}
We say that the function $f$ has total Bounded $\left( \Lambda
^{1},...,\Lambda ^{d}\right) $-variation on $T^{d}$ and write $f\in \left\{
\Lambda ^{1},...,\Lambda ^{d}\right\}BV\left( T^{d}\right) $, if
\begin{equation*}
\left\{ \Lambda ^{1},...,\Lambda ^{d}\right\}
V(f,T^{d}):=\sum\limits_{\alpha \subset D}\left\{ \Lambda ^{1},...,\Lambda
^{d}\right\}V^{{\alpha }}\left( f,T^{d}\right) <\infty .
\end{equation*}
\end{definition}

\begin{definition}
We say that the function $f$ is continuous in $\left( \Lambda
^{1},...,\Lambda ^{d}\right) $-variation on $T^{d}$ and write $f\in C\left\{
\Lambda ^{1},...,\Lambda ^{d}\right\} V\left( T^{d}\right) $, if%
\begin{equation*}
\lim\limits_{n\rightarrow \infty }\left\{ \Lambda ^{j_{1}},...,\Lambda
^{j_{k-1}},\Lambda _{n}^{j_{k}},\Lambda ^{j_{k+1}},...,\Lambda
^{j_{p}}\right\} V^{{\alpha }}\left( f,T^{d}\right) =0,\qquad k=1,2,\ldots ,p
\end{equation*}%
for any $\alpha \subset D,\ \alpha :=\{j_{1},...,j_{p}\}$, where $\Lambda
_{n}^{j_{k}}:=\left\{ \lambda _{s}^{j_{k}}\right\} _{s=n}^{\infty }$.
\end{definition}

\begin{definition}
We say that the function $f$ has Bounded Partial $\left( \Lambda
^{1},...,\Lambda ^{d}\right) $-variation and write $f\in P\left\{ \Lambda
^{1},...,\Lambda ^{d}\right\} BV\left( T^{d}\right) $ if
\begin{equation*}
P\left\{ \Lambda ^{1},...,\Lambda ^{d}\right\}
V(f,T^{d}):=\sum\limits_{i=1}^{d}\Lambda ^{i}V^{\{i\}}\left( f,T^{d}\right)
<\infty .
\end{equation*}
\end{definition}

In the case $\Lambda ^{1}=\cdots =\Lambda ^{d}=\Lambda $ we set
\begin{eqnarray*}
\Lambda BV(T^{d}):= &&\{\Lambda ^{1},...,\Lambda ^{d}\}BV(T^{d}), \\
C\Lambda V(T^{d}):= &&C\{\Lambda ^{1},...,\Lambda ^{d}\}V(T^{d}), \\
P\Lambda BV(T^{d}):= &&P\{\Lambda ^{1},...,\Lambda ^{d}\}BV(T^{d}).
\end{eqnarray*}%
If $\lambda _{n}\equiv 1$ (or if $0<c<\lambda _{n}<C<\infty ,\ n=1,2,\ldots $%
) the classes $\Lambda BV$ and $P\Lambda BV$ coincide with the Hardy class $%
BV$ and $PBV$ respectively. Hence it is reasonable to assume that $\lambda
_{n}\rightarrow \infty $ .

When $\lambda _{n}=n$ for all $n=1,2\ldots $ we say \textit{Harmonic
Variation} instead of $\Lambda $-variation and write $H$ instead of $%
\Lambda, i.e. $ $HBV$, $PHBV$, $CHV$, ets.

For two variable functions Dyachenko and Waterman \cite{DW} introduced
another class of functions of generalized bounded variation.

Denoting by $\Gamma $ the the set of finite collections of nonoverlapping
rectangles $A_{k}:=\left[ \alpha _{k},\beta _{k}\right] \times \left[ \gamma
_{k},\delta _{k}\right] \subset T^{2}$, for a function $f(x,y), \ x,y\in T$,
we set
\begin{equation*}
\Lambda ^{\ast }V\left( f,T^{2}\right) :=\sup_{\{A_{k}\}\in \Gamma
}\sum\limits_{k}\frac{\left\vert f\left( A_{k}\right) \right\vert }{\lambda
_{k}}.
\end{equation*}

\begin{definition}[Dyachenko, Waterman]
We say that $f\in \Lambda ^{\ast }BV\left( T^{2}\right) $ if
\begin{equation*}
\Lambda V(f,T^{2}):=\Lambda V_{1}(f,T^{2})+\Lambda V_{2}(f,T^{2})+\Lambda
^{\ast }V\left( f,T^{2}\right) <\infty .
\end{equation*}
\end{definition}

In this paper we introduce a new classes of functions of generalized bounded
variation and investigate the convergence of Fourier series of function of
that classes.

For the sequence $\Lambda =\{\lambda _{n}\}_{n=1}^{\infty }$ we denote%
\begin{equation*}
\Lambda ^{\#}V_{s}\left( f,T^{d}\right) :=\sup\limits_{\left\{ x^{i}{\left\{
s\right\} }\right\} \subset T^{d-1}}\sup\limits_{\left\{ I_{i}^{s}\right\}
\in \Omega }\sum\limits_{i}\frac{\left\vert f\left( I_{i}^{s},x^{i}{\left\{
s\right\} }\right) \right\vert }{\lambda _{i}},
\end{equation*}%
where
\begin{equation}
x^{i}{\left\{ s\right\} }:=\left( x_{1}^{i},\ldots
,x_{s-1}^{i},x_{s+1}^{i},\ldots ,x_{d}^{i}\right) \quad \text{for}\quad
x^{i}:=\left( x_{1}^{i},\ldots ,x_{d}^{i}\right) .  \label{xis}
\end{equation}

\begin{definition}
\label{def5} We say that the function $f$ belongs to the class $\Lambda
^{\#}BV\left( T^{d}\right) $, if%
\begin{equation*}
\Lambda ^{\#}V\left( f,T^{d}\right) :=\sum\limits_{s=1}^{d}\Lambda
^{\#}V_{s}\left( f,T^{d}\right) <\infty .
\end{equation*}
\end{definition}

The notion of $\Lambda $-variation was introduced by Waterman \cite{W} in
one dimensional case, by Sahakian \cite{Saha} in two dimensional case and by
Sablin \cite{Sab} in the case of higher dimensions. The notion of bounded
partial variation (class $PBV$) was introduced by Goginava in \cite{GoJAT}.
These classes of functions of generalized bounded variation play an
important role in the theory Fourier series.

\begin{remark}
It is not hard to see that $\Lambda ^{\#}BV\left( T^{d}\right)\subset
P\Lambda BV\left(T^{d}\right)$ for any $d>1$ and $\Lambda ^{\ast}BV\left(
T^{2}\right)\subset \Lambda ^{\#}BV\left( T^{2}\right) $.
\end{remark}

We prove that the following theorem is true.

\begin{theorem}
\label{t1} Let $d\geq 2$ and $T=(t_{1},t_{2})\subset R$. If
\begin{equation}
\Lambda =\left\{ \lambda _{n}\right\} \quad \text{with}\quad \lambda _{n}=%
\frac{n}{\log ^{d-1}\left( n+1\right) },\quad n=1,2,\ldots ,  \label{Lambda}
\end{equation}%
then
\begin{equation}
HV\left( f,T^{d}\right) \leq M\left( d\right) \Lambda ^{\#}V\left(
f,T^{d}\right) .  \label{embed}
\end{equation}
\end{theorem}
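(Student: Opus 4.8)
The plan is to decompose $HV(f,T^{d})=\sum_{\alpha\subset D}HV^{\alpha}(f,T^{d})$, the $\alpha$-summand being the $p$-dimensional harmonic variation in the coordinates of $\alpha=\{j_1,\dots,j_p\}$ (taken with a supremum over the remaining $\widetilde\alpha$-coordinates), and to bound each nontrivial term by $C(d)\sum_{s\in\alpha}\Lambda^{\#}V_{s}(f,T^{d})$. Since there are only $2^{d}$ subsets $\alpha$, summing these estimates yields \eqref{embed} with a constant $M(d)$ depending only on $d$. Throughout, the $\widetilde\alpha$-coordinates are frozen at the values over which $HV^{\alpha}$ takes its supremum, and every bound will be uniform in them. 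After relabelling I may assume $\alpha=\{1,\dots,p\}$ with $p\le d$.

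The first step is to convert the genuinely $p$-dimensional mixed difference into pure one-dimensional ones. Expanding the mixed difference in its first $p-1$ coordinates into the $2^{p-1}$ endpoint evaluations, while keeping the $p$-th coordinate as a difference, gives
\[
\bigl|f(I^{1}_{i_{1}}\times\cdots\times I^{p}_{i_{p}})\bigr|
\le\sum_{\varepsilon}\bigl|f(e_{\varepsilon},I^{p}_{i_{p}})\bigr|,
\]
a sum of $2^{p-1}$ terms, each a pure coordinate-$p$ difference over $I^{p}_{i_{p}}$ with the first $p-1$ coordinates fixed at endpoints $e_{\varepsilon}$ of $I^{1}_{i_{1}},\dots,I^{p-1}_{i_{p-1}}$. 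The same expansion can of course single out any coordinate $s\in\alpha$ in place of $p$.

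Next I would split the index tuples according to which coordinate carries the largest index: set $R_{s}=\{(i_{1},\dots,i_{p}):i_{s}\ge i_{k}\ \forall k\}$ (breaking ties toward the least such $s$), so the $R_{s}$ partition the tuples, and on $R_{s}$ use the expansion that keeps coordinate $s$ as a difference. For $s=p$ (the others being symmetric),
\[
\sum_{(i_{1},\dots,i_{p})\in R_{p}}\frac{|f(I^{1}_{i_{1}}\times\cdots\times I^{p}_{i_{p}})|}{i_{1}\cdots i_{p}}
\le\sum_{i_{p}}\frac{1}{i_{p}}\sum_{i_{1},\dots,i_{p-1}\le i_{p}}\frac{\sum_{\varepsilon}|f(e_{\varepsilon},I^{p}_{i_{p}})|}{i_{1}\cdots i_{p-1}}.
\]
For fixed $i_{p}$ and fixed $\varepsilon$ the term $|f(e_{\varepsilon},I^{p}_{i_{p}})|$ depends on $i_{1},\dots,i_{p-1}$ only through the selected endpoints, so I bound the inner sum by its maximum times $\sum_{i_{1},\dots,i_{p-1}\le i_{p}}\frac{1}{i_{1}\cdots i_{p-1}}=\bigl(\sum_{k=1}^{i_{p}}\tfrac1k\bigr)^{p-1}\le(1+\log i_{p})^{p-1}$. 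The maximizing configuration selects a single point $x^{i_{p}}\{p\}\in T^{d-1}$ (its $\alpha$-entries the chosen endpoints, its $\widetilde\alpha$-entries the frozen values), so each surviving term equals $|f(I^{p}_{i_{p}},x^{i_{p}}\{p\})|$, a pure coordinate-$p$ difference with \emph{index-dependent} auxiliary coordinates.

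Collecting, $R_{p}$ is bounded by $2^{p-1}\sum_{i_{p}}\frac{(1+\log i_{p})^{p-1}}{i_{p}}|f(I^{p}_{i_{p}},x^{i_{p}}\{p\})|$, and the decisive point is the weight comparison $\frac{(1+\log i_{p})^{p-1}}{i_{p}}\le\frac{C(d)}{\lambda_{i_{p}}}$ with $\lambda_{n}=n/\log^{d-1}(n+1)$: this holds because $p\le d$ forces $(1+\log i_{p})^{p-1}\le C(d)\log^{d-1}(i_{p}+1)$, and it is exactly here that the exponent $d-1$ in \eqref{Lambda} is calibrated so that the $p-1$ logarithmic factors produced by summing the other indices are absorbed. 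Hence $R_{p}\le C(d,p)\sum_{i_{p}}\frac{|f(I^{p}_{i_{p}},x^{i_{p}}\{p\})|}{\lambda_{i_{p}}}\le C(d,p)\,\Lambda^{\#}V_{p}(f,T^{d})$, since $\Lambda^{\#}V_{p}$ is precisely the supremum of such single-coordinate $\Lambda$-sums with freely, index-dependently chosen auxiliary coordinates. Summing the symmetric bounds over $s\in\alpha$ and then over $\alpha\subset D$ gives \eqref{embed}. The main obstacle is the coupling in this last step: one must arrange that the surviving one-dimensional difference and the weight $1/\lambda$ refer to the \emph{same} coordinate $s$ (the maximal-index one), for only then does the term feed into $\Lambda^{\#}V_{s}$, and one must verify the logarithmic absorption with the correct exponent. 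A secondary point to check is that passing to index-dependent auxiliary points $x^{i}\{s\}$ is legitimate—this is the defining feature of $\Lambda^{\#}BV$ as opposed to $P\Lambda BV$, and the whole argument hinges on exploiting it.
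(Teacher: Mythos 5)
Your proposal is correct and follows essentially the same route as the paper's own proof: the same reduction to each index set $\alpha$, the same partition of index tuples by which coordinate carries the maximal index (the paper phrases this as a sum over rearrangements $\sigma$ with $i_{\sigma(1)}\le\cdots\le i_{\sigma(p)}$), the same expansion of the mixed difference into $2^{p-1}$ single-coordinate differences, the same harmonic-sum bound producing the $\log^{p-1}(i_p+1)$ factor absorbed by $\lambda_{i_p}=i_p/\log^{d-1}(i_p+1)$ since $p\le d$, and the same final appeal to the index-dependent auxiliary points allowed in $\Lambda^{\#}V_s$. If anything, your handling of the fact that the maximizing endpoint configuration depends on the inner indices (resolved by taking the maximum over all configurations, which depends only on $i_p$) is slightly more careful than the paper's wording, which writes the chosen points $x_1^{i_p},\ldots,x_{p-1}^{i_p}$ as if they depended on $i_p$ alone.
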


\begin{proof}
We have to prove that for any $\alpha :=\{j_{1},...,j_{p}\}\subset D$%
\begin{equation}  \label{main1}
\sup_{\{I_{i_{j}}^{j}\}\in \Omega }\sum_{i_{1},\ldots ,i_{p}}\frac{%
\left\vert f\left( I_{i_{1}}^{1}\times \cdots \times I_{i_{p}}^{p},x_{%
\widetilde{\alpha }}\right) \right\vert }{i_{1}\cdots i_{p}} \leq M\left(
d\right) \sum\limits_{s=1}^{d}\Lambda ^{\#}V_{s}\left( f, T^{d}\right)
\end{equation}
To this end, observe that
\begin{eqnarray}
&&\sum_{i_{1},\ldots ,\,i_{p}}\frac{\left\vert f\left( I_{i_{1}}^{1}\times
\cdots \times I_{i_{p}}^{p},x_{\widetilde{\alpha }}\right) \right\vert }{%
i_{1}\cdots i_{p}}  \label{sum} \\
&=&\sum_{\sigma }\sum_{i_{\sigma (1)}\leq \cdots \leq i_{\sigma (p)}}\frac{%
\left\vert f\left( I_{i_{1}}^{1}\times \cdots \times I_{i_{p}}^{p},x_{%
\widetilde{\alpha }}\right) \right\vert }{i_{1}\cdots i_{p}},  \notag
\end{eqnarray}%
where the sum is taken over all rearrangements $\sigma =\{\sigma
(k)\}_{k=1}^{p}$ of the set $\{1,2,\ldots ,p\}$.

Next, we have
\begin{eqnarray}
&&\sum_{i_{1}\leq \cdots \leq i_{p}}\frac{\left\vert f\left(
I_{i_{1}}^{1}\times \cdots \times I_{i_{p}}^{p},x_{\widetilde{\alpha }%
}\right) \right\vert }{i_{1}\cdots i_{p}}  \label{rear} \\
&=&\sum\limits_{i_{p}}\frac{1}{i_{p}}\sum_{i_{1}\leq \cdots \leq i_{p}}\frac{%
\left\vert f\left( I_{i_{1}}^{1}\times \cdots \times I_{i_{p}}^{p},x_{%
\widetilde{\alpha }}\right) \right\vert }{i_{1}\cdots i_{p-1}}.  \notag
\end{eqnarray}%
Taking into account that for the fixed $i_{p}\left( i_{1}\leq \cdots \leq
i_{p}\right) $ there exists $x_{1}^{i_{p}},\ldots ,x_{p-1}^{i_{p}}\in T$
such that
\begin{equation*}
{\left\vert f\left( I_{i_{1}}^{1}\times \cdots \times I_{i_{p}}^{p},x_{%
\widetilde{\alpha }}\right) \right\vert }\leq 2^{d}\left\vert f\left(
I_{i_{p}}^{p},x_{1}^{i_{p}},\ldots ,x_{p-1}^{i_{p}},x_{\widetilde{\alpha }%
}\right) \right\vert
\end{equation*}
from (\ref{rear}) we obtain%
\begin{eqnarray*}
&&\sum_{i_{1}\leq \cdots \leq i_{p}}\frac{\left\vert f\left(
I_{i_{1}}^{1}\times \cdots \times I_{i_{p}}^{p},x_{\widetilde{\alpha }%
}\right) \right\vert }{i_{1}\cdots i_{p}} \\
&\leq &2^{d}\sum\limits_{i_{p}}\frac{\left\vert f\left(
I_{i_{p}}^{p},x_{1}^{i_{p}},\ldots ,x_{p-1}^{i_{p}},x_{\widetilde{\alpha }%
}\right) \right\vert }{i_{p}}\sum_{i_{1}\leq \cdots \leq i_{p}}\frac{1}{%
i_{1}\cdots i_{p-1}} \\
&\leq &M\left( d\right) \sum\limits_{i_{p}}\frac{\log ^{d-1}\left(
i_{p}+1\right) }{i_{p}}\left\vert f\left( I_{i_{p}}^{p},x_{1}^{i_{p}},\ldots
,x_{p-1}^{i_{p}},x_{\widetilde{\alpha }}\right) \right\vert  \\
&\leq &M\left( d\right) \Lambda ^{\#}V_{i_{p}}\left( f,T^{d}\right) \leq
M\left( d\right) \Lambda ^{\#}V\left( f,T^{d}\right) .
\end{eqnarray*}%
Similarly one can obtain bounds for other summands in the right hind side of
(\ref{sum}), which imply (\ref{embed}). Theorem \ref{t1} is proved.
\end{proof}

\begin{corollary}
If the sequence $\Lambda$ is defined by (\ref{Lambda}), then $%
\Lambda^\#BV(T^d)\subset HBV(T^d)$.
\end{corollary}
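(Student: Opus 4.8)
The plan is to obtain the inclusion directly from the quantitative estimate (\ref{embed}) of Theorem \ref{t1}, of which the Corollary is merely the qualitative reformulation; essentially all of the substantive work is already done there. First I would unwind the two membership conditions. By Definition \ref{def5}, the statement $f\in\Lambda^{\#}BV(T^{d})$ means precisely that
\begin{equation*}
\Lambda^{\#}V(f,T^{d})=\sum_{s=1}^{d}\Lambda^{\#}V_{s}(f,T^{d})<\infty ,
\end{equation*}
whereas $f\in HBV(T^{d})$ means, specializing the definition of total bounded $(\Lambda^{1},\ldots,\Lambda^{d})$-variation to the Harmonic choice $\lambda_{n}\equiv n$, that the total variation $HV(f,T^{d})=\sum_{\alpha\subset D}HV^{\alpha}(f,T^{d})$ is finite.

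With these identifications the argument is immediate. Fixing $f\in\Lambda^{\#}BV(T^{d})$ and taking $\Lambda$ as in (\ref{Lambda}), Theorem \ref{t1} applies and gives
\begin{equation*}
HV(f,T^{d})\le M(d)\,\Lambda^{\#}V(f,T^{d})<\infty ,
\end{equation*}
the final inequality being exactly the hypothesis. Since finiteness of $HV(f,T^{d})$ is the defining condition for membership in $HBV(T^{d})$, I conclude $f\in HBV(T^{d})$, and therefore $\Lambda^{\#}BV(T^{d})\subset HBV(T^{d})$.

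The only point worth noting is that the constant $M(d)$ in (\ref{embed}) does not depend on $f$, so that finiteness of the right-hand side transfers to the left with no additional estimate; consequently there is no real obstacle at the level of the Corollary itself. All of the genuine difficulty has already been absorbed into Theorem \ref{t1}: the symmetrization of the sum in (\ref{sum}) over the $p!$ orderings of the indices, the domination of a full $p$-fold mixed difference by a single-coordinate difference evaluated at points that depend on the extremal index (which is precisely what the supremum defining $\Lambda^{\#}V_{s}$ is built to absorb), and the logarithmic bound on the inner sum that singles out the weight $\lambda_{n}=n/\log^{d-1}(n+1)$. Given all this, the Corollary requires nothing beyond reading off (\ref{embed}).
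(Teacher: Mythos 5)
Your proof is correct and matches the paper's (implicit) argument exactly: the Corollary is stated in the paper with no separate proof precisely because it is the qualitative restatement of the inequality (\ref{embed}), and your unwinding of the definitions of $\Lambda^{\#}BV(T^{d})$ and $HBV(T^{d})$ followed by one application of Theorem \ref{t1} is all that is needed. Your remark that $M(d)$ is independent of $f$ correctly identifies the only point that must be checked for the finiteness to transfer.
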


Now, we denote
\begin{equation}  \label{Delta}
\Delta:=\{\delta=(\delta_1,\ldots,\delta_d):\delta_i=\pm1, \ i=1,2,\ldots,d\}
\end{equation}
and
\begin{equation*}
\pi_{\varepsilon\delta}(x):=(x_1,\,
x_1+\varepsilon\delta_1)\times\cdots\times (x_d,\, x_d+\varepsilon\delta_d),
\end{equation*}
for $x=(x_1,\ldots,x_d)\in R^d$ and $\varepsilon>0$. We set $%
\pi_{\delta}(x):=\pi_{\varepsilon\delta}(x)$, if $\varepsilon=1$.

For a function $f$ defined in some neighbourhood of a point $x$ and $%
\delta\in \Delta$ we set
\begin{equation}  \label{lim}
f_\delta(x):=\lim_{t\in \pi_\delta(x),\ t\to x} f(t),
\end{equation}
if the last limit exists.

\begin{theorem}
\label{th2} Suppose $f\in \Lambda ^{\#}BV\left( T^{d}\right)$ for some
sequence $\Lambda=\{\lambda_n\}$.

a) If the limit $f_\delta(x)$ exists for some $x=(x_1,\ldots,x_d)\in T^d$
and some $\delta=(\delta_1,\ldots,\delta_d)\in \Delta$, then
\begin{equation}  \label{th2a}
\lim\limits_{\varepsilon \rightarrow 0}\Lambda ^{\#}V\left(
f,\pi_{\varepsilon\delta}(x)\right) =0.
\end{equation}

b) If $f$ is continuous on some compact $K\subset T^{d}$, then
\begin{equation}  \label{th2b}
\lim_{\varepsilon \rightarrow 0}\Lambda ^{\#}V\left( f, \left[%
x_{1}-\varepsilon ,x_{1}+\varepsilon \right] \times\cdots\times \left[%
x_{d}-\varepsilon ,x_{d}+\varepsilon \right] \right) =0
\end{equation}
uniformly with respect to $x=(x_1,\ldots,x_d)\in K$.
\end{theorem}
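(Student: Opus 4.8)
The plan is to handle both parts by reducing everything to a one‑coordinate tail estimate. For (a), since $\Lambda^{\#}V(f,\pi_{\varepsilon\delta}(x))=\sum_{s=1}^{d}\Lambda^{\#}V_{s}(f,\pi_{\varepsilon\delta}(x))$, it suffices to prove $\lim_{\varepsilon\to0}\Lambda^{\#}V_{s}(f,\pi_{\varepsilon\delta}(x))=0$ for each fixed $s$. The map $\varepsilon\mapsto\Lambda^{\#}V_{s}(f,\pi_{\varepsilon\delta}(x))$ is nondecreasing (a smaller box admits fewer admissible collections), so the limit $L_{s}\ge0$ exists and I only have to exclude $L_{s}>0$. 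The hypothesis I would exploit is that the existence of $f_{\delta}(x)$ forces the oscillation $\omega(\varepsilon):=\sup\{|f(t)-f(t')|:t,t'\in\pi_{\varepsilon\delta}(x)\}$ to tend to $0$ as $\varepsilon\to0$, since every point of $\pi_{\varepsilon\delta}(x)$ lies in the quadrant $\pi_{\delta}(x)$ within $O(\varepsilon)$ of $x$. Consequently every first‑order difference $f(I_{i}^{s},x^{i}\{s\})$ appearing in $\Lambda^{\#}V_{s}(f,\pi_{\varepsilon\delta}(x))$ is bounded in modulus by $\omega(\varepsilon)$.

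The core estimate is a head–tail split of the maximizing sum. After rearranging the differences decreasingly, $c_{1}^{*}\ge c_{2}^{*}\ge\cdots$, I would write $\sum_{i}c_{i}^{*}/\lambda_{i}=\sum_{i\le N}+\sum_{i>N}$. The head is harmless: $\sum_{i\le N}c_{i}^{*}/\lambda_{i}\le\omega(\varepsilon)\sum_{i\le N}1/\lambda_{i}$, which for any fixed $N$ tends to $0$ as $\varepsilon\to0$. Everything therefore rests on showing that the tail $\sum_{i>N}c_{i}^{*}/\lambda_{i}$ is small uniformly in $\varepsilon$ once $N$ is large. This is the main obstacle: the tail is not controlled by the oscillation, and a naive use of $c_{i}^{*}\le\Lambda^{\#}V_{s}(f,T^{d})/\sum_{k\le i}\lambda_{k}^{-1}$ is not summable against $\lambda_{i}^{-1}$ (for the harmonic weights it produces $\sum 1/(i\log i)=\infty$), so the bounded‑variation hypothesis must be used globally rather than term by term.

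To overcome this I would argue by contradiction from $L_{s}>0$. For a sequence $\varepsilon_{1}>\varepsilon_{2}>\cdots\to0$ I choose finite admissible collections $\mathcal{C}_{k}$ in $\pi_{\varepsilon_{k}\delta}(x)$ with $\sum_{i}b_{k,i}/\lambda_{i}>L_{s}/2$; because each $\mathcal{C}_{k}$ is finite, its $s$‑intervals stay a positive distance from $x_{s}$, so after passing to a subsequence of $\varepsilon_{k}$ the $s$‑intervals of distinct $\mathcal{C}_{k}$ become pairwise nonoverlapping (the points in the remaining coordinates are unconstrained in the definition of $\Lambda^{\#}V_{s}$). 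Then $\bigcup_{k\le K}\mathcal{C}_{k}$ is a single admissible collection, so its maximizing sum is $\le\Lambda^{\#}V_{s}(f,T^{d})<\infty$ for every $K$. The technical heart is to show that this combined sum grows without bound in $K$: each $\mathcal{C}_{k}$ is worth at least $L_{s}/2$ when its weights restart from $\lambda_{1},\lambda_{2},\dots$, and I expect to prove that assembling disjoint blocks at rapidly separated scales accumulates these contributions (of order $\sum_{k}L_{s}/2$) rather than letting the growing weights $\lambda_{i}\to\infty$ dilute them, contradicting global finiteness. Quantifying this accumulation—comparing the combined decreasing rearrangement against the individual ones—is the step I anticipate will demand the most care.

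For (b) I would upgrade (a) to a statement uniform over $K$. Continuity of $f$ on $K$ yields, via uniform continuity on a compact neighbourhood, a single modulus $\omega(\varepsilon)\to0$ bounding the oscillation of $f$ on every centred box $\prod_{j}[x_{j}-\varepsilon,x_{j}+\varepsilon]$, $x\in K$, simultaneously; continuity at $x$ also makes every quadrant limit $f_{\delta}(x)$ exist and equal $f(x)$, so the oscillation bound is valid up to and including the centre. The head–tail estimate then runs with $x$‑independent constants. The only point again needing attention is the tail: here I would assume a sequence $x_{k}\in K$, $\varepsilon_{k}\to0$ with $\Lambda^{\#}V(f,\prod_{j}[x_{k,j}-\varepsilon_{k},x_{k,j}+\varepsilon_{k}])>\eta$, pass to a convergent subsequence $x_{k}\to x_{*}\in K$, and reproduce the disjoint‑block construction around $x_{*}$ to contradict $\Lambda^{\#}V(f,T^{d})<\infty$. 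Monotonicity in $\varepsilon$ together with compactness of $K$ are what make this uniformization possible.
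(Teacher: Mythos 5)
Your overall strategy for part a) --- argue by contradiction, extract collections at rapidly shrinking scales whose $s$-intervals are pairwise nonoverlapping, unite them into one admissible collection, and contradict $\Lambda ^{\#}V_{s}\left( f,T^{d}\right) <\infty $ --- is the paper's strategy, and your part b) (monotonicity in $\varepsilon $, compactness, a convergent subsequence $x^{i}\rightarrow x$, and the pointwise statement at $x$) is the paper's argument almost verbatim. However, part a) has a genuine gap exactly at the step you yourself call ``the technical heart.'' You take each block $\mathcal{C}_{k}$ to satisfy a \emph{restarted-weight} bound $\sum_{i}b_{k,i}/\lambda _{i}>L_{s}/2$, and then hope that the union of $K$ such blocks has a sum growing in $K$. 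That implication does not follow from the stated properties of the blocks: in the combined collection, the values of block $k$ are paired with the weights $\lambda _{M_{k-1}+i}$ (or are scattered further by the decreasing rearrangement), where $M_{k-1}$ is the number of intervals in the earlier blocks, and since $\lambda _{i}\rightarrow \infty $ a block whose restarted-weight sum exceeds $L_{s}/2$ can contribute arbitrarily little once it is shifted to tail positions (for harmonic-type weights its contribution is roughly $\omega (\varepsilon _{k})\log \left( (M_{k-1}+m_{k})/M_{k-1}\right) $, which is tiny when $m_{k}\ll M_{k-1}$). So summing per-block lower bounds fails, and the rearrangement comparison you defer is precisely the unproved content; the proposal is a plan, not a proof, at its decisive point.

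The paper closes this gap by constructing the blocks so that they are large against the \emph{global tail} weights rather than the restarted ones, and your own head--tail observation is exactly the tool needed. Inductively, having already used indices $1,\ldots ,n_{k}$, one exploits the existence of $f_{\delta }(x)$ to choose $\varepsilon _{k+1}$ so small that \emph{every} collection of $n_{k}$ nonoverlapping intervals in $\left( x_{1},x_{1}+\varepsilon _{k+1}\right) $ (with arbitrary fixed values of the other variables inside $\pi _{\varepsilon _{k+1}\delta }(x)$) satisfies $\sum_{i=1}^{n_{k}}\left\vert f\left( J_{i}^{1};\gamma _{2}^{i},\ldots ,\gamma _{d}^{i}\right) \right\vert /\lambda _{i}<\alpha /2$; this is possible because $n_{k}$ is a fixed finite number and the oscillation of $f$ on $\pi _{\varepsilon \delta }(x)$ tends to $0$. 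Then any collection at scale $\varepsilon _{k+1}$ with full sum $>\alpha $ must have its terms of index $n_{k}+1,\ldots ,n_{k+1}$ --- carrying the weights $\lambda _{n_{k}+1},\ldots ,\lambda _{n_{k+1}}$ --- sum to more than $\alpha /2$, and exactly those intervals form the next block. The union of all blocks, taken in its natural order, then has sum $>\sum_{k}\alpha /2=\infty $, contradicting $f\in \Lambda ^{\#}BV\left( T^{d}\right) $ with no rearrangement comparison at all. In short: apply your head--tail split at the construction stage with head length $n_{k}$ (the number of intervals already used), and your outline becomes the paper's proof; as written, the key estimate is missing.
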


\begin{proof}
According to Definition \ref{def5}, we need to prove that
\begin{equation}
\lim\limits_{\varepsilon \rightarrow 0}\Lambda ^{\#}V_{s}\left( f,\pi
_{\varepsilon \delta }(x)\right) =0  \label{th2as}
\end{equation}%
for any $s=1,2,\ldots ,d$. Without loss of generality we can assume that $%
s=1 $ and $\delta _{i}=1$ for $i=1,2,\ldots ,d$. Assume to the contrary that
(\ref{th2as}) does not holds:
\begin{equation*}
\lim\limits_{\varepsilon \rightarrow 0}\Lambda ^{\#}V_{1}\left( f,\pi
_{\varepsilon \delta }(x)\right) \neq 0.
\end{equation*}%
Then there exists a number $\alpha $ such that
\begin{equation}
\Lambda ^{\#}V_{1}\left( f,\pi _{\varepsilon \delta }(x)\right) >\alpha >0
\label{alpha}
\end{equation}%
for any $\varepsilon >0$.

Using induction on $k=1,2,\ldots$, we construct positive numbers $%
\varepsilon_k$ and the sequences of collections of non-overlapping intervals
\begin{equation}  \label{intervals}
I_{i}^{1}\subset \left(x_{1}+\varepsilon_{k+1},x_{1}+\varepsilon_k
\right),\quad i=n_k+1,...,n_{k+1}
\end{equation}
and vectors
\begin{equation}  \label{vectors}
\beta^ i=(\beta^i_1,\ldots,\beta^i_d)\in \pi_{\varepsilon_k\delta}(x),\quad
i=n_{k}+1,...,n_{k+1}
\end{equation}
as follows. By (\ref{alpha}), for a fixed number $\varepsilon_1>0$ we find a
collection of non-overlapping intervals
\begin{equation*}
I_{i}^{1}\subset \left(x_1,x_{1}+\varepsilon_1 \right),\quad i=1,...,n_{1}
\end{equation*}
and vectors
\begin{equation*}
\beta^ i=(\beta^i_1,\ldots,\beta^i_d)\in \pi_{\varepsilon_1\delta}(x),\quad
i=1,...,n_{1}
\end{equation*}
such that
\begin{equation}  \label{alpha1}
\sum\limits_{i=1}^{n_{1}}\frac{\left\vert f\left(
I_{i}^{1};\beta^i_2,\ldots,\beta^i_d\right) \right\vert }{\lambda _{i}}%
>\alpha.
\end{equation}

Now, suppose the number $\varepsilon _{k}$, intervals (\ref{intervals}) and
the vectors (\ref{vectors}) for some $k=1,2\ldots $ are constructed . Since
the limit $f_{\delta }(x)$ exists, we can choose $\varepsilon _{k+1}$
satisfying
\begin{equation}
0<\varepsilon _{k+1}<\varepsilon _{k},\qquad \left( x_{1},x_{1}+\varepsilon
_{k+1}\right) \bigcap \left( \bigcup\limits_{i=1}^{n_{k}}I_{i}^{1}\right)
=\emptyset  \label{intervals1}
\end{equation}%
and
\begin{equation}
\sum\limits_{i=1}^{n_{k}}\frac{\left\vert f\left( J_{i}^{1};\gamma
_{2}^{i},\ldots ,\gamma _{d}^{i}\right) \right\vert }{\lambda _{i}}<\frac{%
\alpha }{2}  \label{alpha1}
\end{equation}%
for any collection of non-overlapping intervals
\begin{equation*}
J_{i}^{1}\subset \left( x_{1},x_{1}+\varepsilon _{k+1}\right) ,\quad
i=1,...,n_{k}
\end{equation*}%
and for any vectors
\begin{equation*}
\gamma ^{i}=(\gamma _{1}^{i},\ldots ,\gamma _{d}^{i})\in \pi _{\varepsilon
_{k+1}\delta }(x),\quad i=1,...,n_{k}.
\end{equation*}%
Further, according to (\ref{alpha}) there is a collection of non-overlapping
intervals
\begin{equation}
J_{i}^{1}\subset \left( x_{1},x_{1}+\varepsilon _{k+1}\right) ,\quad
i=1,...,n_{k+1}  \label{intervals2}
\end{equation}%
and vectors
\begin{equation*}
\gamma ^{i}=(\gamma _{1}^{i},\ldots ,\gamma _{d}^{i})\in \pi _{\varepsilon
_{k+1}\delta }(x),\quad i=1,...,n_{k+1}
\end{equation*}%
such that
\begin{equation}
\sum\limits_{i=1}^{n_{k+1}}\frac{\left\vert f\left( J_{i}^{1};\gamma
_{2}^{i},\ldots ,\gamma _{d}^{i}\right) \right\vert }{\lambda _{i}}>\alpha .
\label{alpha2}
\end{equation}%
Now, denoting
\begin{equation}
I_{i}^{1}=J_{i}^{1},\beta ^{i}=\gamma ^{i}\quad \text{for}\quad
i=n_{k}+1,\ldots ,n_{k+1},  \label{intervals3}
\end{equation}%
from (\ref{alpha1}) and (\ref{alpha2}) we get
\begin{equation}
\sum\limits_{i=n_{k}+1}^{n_{k+1}}\frac{\left\vert f\left( I_{i}^{1};\beta
_{2}^{i},\ldots ,\beta _{d}^{i}\right) \right\vert }{\lambda _{i}}>\frac{%
\alpha }{2}.  \label{alpha3}
\end{equation}%
Intervals (\ref{intervals}) and vectors (\ref{vectors}) for $k=1,2,\ldots $,
are constructed.

By (\ref{intervals1}), (\ref{intervals2}) and (\ref{intervals3}), the
intervals $I^1_i$ are non-overlapping for $i=1,2,\ldots$, while according to
(\ref{alpha3}),
\begin{equation*}
\sum\limits_{i=1}^{\infty}\frac{\left\vert f\left(
I_{i}^{1};\beta^i_2,\ldots,\beta^i_d\right) \right\vert} {\lambda _{i}}%
=\infty.
\end{equation*}
Consequently, $\Lambda ^{\#}V_1\left( f,T^d\right)=\infty$. This
contradiction completes proof of the statement a) of Theorem \ref{th2}.

To prove statement b), observe that a) obviously implies (\ref{th2b}) for
any point $x\in T^d$, where $f$ is continuous. Hence, we have to prove that (%
\ref{th2b}) holds uniformly with respect to $x\in K$, provided that $f$ is
continuous on the compact $K\subset T^d$.

To this end let us assume to the contrary that (\ref{th2b}) does not hold
uniformly on $K$. Then there exist $\delta >0$ and sequences
\begin{equation*}
x^i=(x^i_{1},\ldots x^i_d)\in K \ \text{and}\ \varepsilon _{i}>0, \quad
i=1,2,\ldots \quad \text{with}\quad \varepsilon _{i}\rightarrow 0
\end{equation*}
such that%
\begin{equation*}
\Lambda ^{\#}V\left( f;\left[ x_{1}^{i }-\varepsilon _{i},x_{1}^{i
}+\varepsilon _{i}\right] \times\cdots\times \left[ x_{d}^{i
}-\varepsilon_{i},x_{d}^{i }+\varepsilon _{i}\right] \right) \geq \delta >0%
\text{.}
\end{equation*}%
Since $K$ is compact we can assume without loss of generality that $x^{ i
}\rightarrow x$ for some $x=(x_1,\ldots,x_d)\in K$. Then obviously for each $%
\varepsilon >0$ there is a number $i(\varepsilon)$ such that%
\begin{equation*}
\left[ x_{j}^{i }-\varepsilon _{i},x_{j}^{i }+\varepsilon _{i}\right]
\subset \left[ x_{j}-\varepsilon ,x_{j}+\varepsilon \right],\quad
j=1,\ldots,d \quad\text{for}\quad i>i\left( \varepsilon \right).
\end{equation*}%
Consequently,%
\begin{equation*}
\Lambda ^{\#}V\left( f;\left[ x_{1}-\varepsilon ,x_{1}+\varepsilon \right]
\times\cdots\times \left[ x_{d}-\varepsilon,x_{d}+\varepsilon\right]
\right)\geq \delta >0,
\end{equation*}%
for any $\varepsilon >0$, which is a contradiction.

Theorem \ref{th2} is proved.
\end{proof}

Next, we define
\begin{equation*}
v_{s}^{\#}\left( f,n\right) :=\sup_{\{x^{i}\}_{i=1}^{n}\subset
T^d}\sup\limits_{\{I_{i}^{s}\}_{i=1}^{n}\in \Omega
_{n}}\sum\limits_{i=1}^{n}\left\vert f\left( I_{i}^{s},x^i\{s\} \right)
\right\vert ,\quad s=1,...,d,\quad n=1,2,\ldots,
\end{equation*}
where $x^i\{s\}$ is as in (\ref{xis}). The following theorem holds.

\begin{theorem}
\label{v(n)}If the function $f(x),\ x\in T^d$ satisfies the condition
\begin{equation*}
\sum\limits_{n=1}^{\infty }\frac{v_{s}^{\#}\left( f,n\right) \log
^{d-1}\left( n+1\right) }{n^{2}}<\infty ,\qquad \ s=1,2,...,d,
\end{equation*}%
then $f\in \left\{ \frac{n}{\log ^{d-1}\left( n+1\right) }\right\}
^{\#}BV\left( T^{d}\right) .$
\end{theorem}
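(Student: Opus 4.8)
The plan is to show that the hypothesis on the Abel-type sum $\sum_n v_s^{\#}(f,n)\log^{d-1}(n+1)/n^2$ controls the relevant weighted sums defining $\Lambda^{\#}V_s$ with $\lambda_i = i/\log^{d-1}(i+1)$. Fix $s$; by Definition~\ref{def5} it suffices to bound $\Lambda^{\#}V_s(f,T^d)$ for each $s=1,\dots,d$. Unwinding the definition, for any choice of the suppressed coordinates $x^i\{s\}\in T^{d-1}$ and any collection of nonoverlapping intervals $\{I_i^s\}\in\Omega$,
\begin{equation*}
\sum_i \frac{|f(I_i^s,x^i\{s\})|}{\lambda_i}
=\sum_i \frac{\log^{d-1}(i+1)}{i}\,|f(I_i^s,x^i\{s\})|,
\end{equation*}
and the quantity $v_s^{\#}(f,n)$ is precisely the supremum of $\sum_{i=1}^n |f(I_i^s,x^i\{s\})|$ over all such data involving exactly $n$ intervals. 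So the whole problem reduces to a one-dimensional Abel summation estimate: I want to bound $\sum_i a_i |f(I_i^s,x^i\{s\})|$, where $a_i:=\log^{d-1}(i+1)/i$, in terms of the partial-sum bounds $v_s^{\#}(f,n)$.

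The key step is \textbf{Abel (summation by parts)} applied to the coefficient sequence $a_i=\log^{d-1}(i+1)/i$, which is eventually decreasing. Writing $S_n:=\sum_{i=1}^n |f(I_i^s,x^i\{s\})|\le v_s^{\#}(f,n)$, summation by parts gives
\begin{equation*}
\sum_{i=1}^N a_i\,|f(I_i^s,x^i\{s\})| = a_N S_N + \sum_{i=1}^{N-1}(a_i-a_{i+1})S_i.
\end{equation*}
Since $a_i-a_{i+1}\ge 0$ for large $i$ and $S_i\le v_s^{\#}(f,i)$, the right side is dominated by $a_N v_s^{\#}(f,N)+\sum_i (a_i-a_{i+1})v_s^{\#}(f,i)$. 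Now I must convert the telescoped differences $a_i-a_{i+1}$ back into the series $\sum v_s^{\#}(f,i)\log^{d-1}(i+1)/i^2$ appearing in the hypothesis. The heuristic is that $a_i-a_{i+1}\approx -a'(i)$, and a short computation shows $|a'(i)|=O(\log^{d-1}(i+1)/i^2)$: differentiating $\log^{d-1}(t+1)/t$ produces a leading term $-\log^{d-1}(t+1)/t^2$ plus a lower-order term $(d-1)\log^{d-2}(t+1)/(t(t+1))$ which is also $O(\log^{d-1}(i+1)/i^2)$. Hence $a_i-a_{i+1}\le C(d)\log^{d-1}(i+1)/i^2$ for $i$ beyond some index, and the tail $\sum_i (a_i-a_{i+1})v_s^{\#}(f,i)$ is bounded by a constant multiple of the convergent series in the hypothesis.

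The \textbf{main obstacle} will be the boundary term $a_N S_N=a_N v_s^{\#}(f,N)$: since $a_N\to 0$ only like $\log^{d-1}N/N$, I need to confirm that $a_N v_s^{\#}(f,N)$ stays bounded (indeed I would like it to vanish or at least be controlled) as $N\to\infty$. This is forced by the convergence of the hypothesis series, because $\sum_n v_s^{\#}(f,n)\log^{d-1}(n+1)/n^2<\infty$ together with monotonicity of $v_s^{\#}(f,n)$ in $n$ implies $v_s^{\#}(f,N)\log^{d-1}(N+1)/N\to 0$ by a standard Cauchy-condensation / tail-comparison argument (if $a_N v_s^{\#}(f,N)$ did not tend to $0$, a block of terms of the series from $N$ to $2N$ would be bounded below, contradicting convergence). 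Once this boundary term is controlled, taking the supremum over all $x^i\{s\}$ and all interval collections $\{I_i^s\}$, and then letting $N\to\infty$, yields
\begin{equation*}
\Lambda^{\#}V_s(f,T^d)\le C(d)\sum_{n=1}^{\infty}\frac{v_s^{\#}(f,n)\log^{d-1}(n+1)}{n^2}<\infty,
\end{equation*}
and summing over $s=1,\dots,d$ gives $f\in\{n/\log^{d-1}(n+1)\}^{\#}BV(T^d)$, as claimed. The only genuinely delicate point is handling the finitely many small indices where $a_i$ is not yet monotone, which is absorbed into the constant $C(d)$.
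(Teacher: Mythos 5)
Your proposal is correct and follows essentially the same route as the paper's proof: Abel summation with the coefficient sequence $a_i=\log^{d-1}(i+1)/i$, the bound $S_i\le v_s^{\#}(f,i)$ on partial sums, and the estimate $a_i-a_{i+1}=O\left(\log^{d-1}(i+1)/i^{2}\right)$ to compare with the hypothesis series. The only inessential difference is the boundary term: the paper dominates $a_n v_s^{\#}(f,n)$ by the tail sum $\sum_{j\ge n}(a_j-a_{j+1})\,v_s^{\#}(f,j)$ using monotonicity of $v_s^{\#}(f,\cdot)$, while you control it by a block-comparison argument with the convergent series.
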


\begin{proof}
Let $s=1,\dots ,d$ be fixed. The for any collection of intervals ${%
\{I_{i}^{s}\}_{i=1}^{n}\in \Omega _{n}}$ and a sequence of vectors ${%
\{x^{i}\}_{i=1}^{n}\in T^{d}}$, using Abel's partial summation we obtain%
\begin{eqnarray}
&&\sum\limits_{j=1}^{n}\frac{\left\vert f\left( I_{j}^{s},x^{j}{\{s\}}%
\right) \right\vert \log ^{d-1}\left( j+1\right) }{j}  \label{v} \\
&=&\sum\limits_{j=1}^{n-1}\left( \frac{\log ^{d-1}\left( j+1\right) }{j}-%
\frac{\log ^{d-1}\left( j+2\right) }{j+1}\right)
\sum\limits_{k=1}^{j}\left\vert f\left( I_{k}^{s},x^{k}{\left\{ s\right\} }%
\right) \right\vert   \notag \\
&&+\frac{\log ^{d-1}\left( n+1\right) }{n}\sum\limits_{j=1}^{n}\left\vert
f\left( I_{j}^{s},x^{j}{\left\{ s\right\} }\right) \right\vert   \notag \\
&\leq &\sum\limits_{j=1}^{n-1}\left( \frac{\log ^{d-1}\left( j+1\right) }{j}-%
\frac{\log ^{d-1}\left( j+2\right) }{j+1}\right) v_{s}^{\#}\left( f,j\right)
\notag \\
&&+\frac{\log ^{d-1}\left( n+1\right) }{n}v_{s}^{\#}\left( f,n\right) .
\notag
\end{eqnarray}%
Using the inequality
\begin{eqnarray}
&&\frac{\log ^{d-1}\left( n+1\right) }{n}v_{s}^{\#}\left( f,n\right)
\label{v0} \\
&\leq &\sum\limits_{j=n}^{\infty }\left( \frac{\log ^{d-1}\left( j+1\right)
}{j}-\frac{\log ^{d-1}\left( j+2\right) }{j+1}\right) v_{s}^{\#}\left(
f,j\right) ,  \notag
\end{eqnarray}%
from (\ref{v}) we get%
\begin{equation}
\left\{ \frac{n}{\log ^{d-1}\left( n+1\right) }\right\} ^{\#}V_{s}\left(
f,T^{d}\right) \leq c\sum\limits_{n=1}^{\infty }\frac{v_{s}^{\#}\left(
f,n\right) \log ^{d-1}\left( n+1\right) }{n^{2}}<\infty .  \label{v1}
\end{equation}

Theorem \ref{v(n)} is proved.
\end{proof}

\section{\protect\medskip Convergence of multiple Fourier series}

We suppose throughout this section, that $T=[0,2\pi )$ and $T^{d}=[0,2\pi
)^{d}$, $d\ge2$, stands for the $d$-dimensional torus.

We denote by $C(T^{d})$ the space of continuous and $2\pi $-periodic with
respect to each variable functions with the norm
\begin{equation*}
\Vert f\Vert _{C}:=\sup_{\left( x_{1},\ldots ,\,x_{d}\right) \in
T^{d}}|f(x_{1},\ldots ,x_{d})|.
\end{equation*}

The Fourier series of the function $f\in L^{1}\left( T^{d}\right) $ with
respect to the trigonometric system is the series
\begin{equation*}
Sf\left( x_{1},...,x_{d}\right) :=\sum_{n_{1},...,n_{d}=-\infty }^{+\infty }%
\widehat{f}\left( n_{1},....,n_{d}\right) e^{i\left( n_{1}x_{1}+\cdots
+n_{d}x_{d}\right) },
\end{equation*}%
where
\begin{equation*}
\widehat{f}\left( n_{1},....,n_{d}\right) =\frac{1}{\left( 2\pi \right) ^{d}}%
\int_{T^{d}}f(x^{1},...,x^{d})e^{-i\left( n_{1}x_{1}+\cdots
+n_{d}x_{d}\right) }dx_{1}\cdots dx_{d}
\end{equation*}%
are the Fourier coefficients of $f$.

In this paper we consider convergence of \textbf{only rectangular partial
sums} (convergence in the sense of Pringsheim) of $d$-dimensional Fourier
series. Recall that the rectangular partial sums are defined as follows:
\begin{eqnarray*}
&&S_{N_{1},...,N_{d}}f\left( x_{1},...,x_{d}\right) \\
&&:=\sum_{n_{1}=-N_{1}}^{N_{1}}\cdots \sum_{n_{d}=-N_{d}}^{N_{d}}\widehat{f}%
\left( n_{1},....,n_{d}\right) e^{i\left( n_{1}x^{1}+\cdots
+n_{d}x^{d}\right) }.
\end{eqnarray*}

We say that the point $x\in T^d$ is \textit{a regular point} of a function $%
f $, if the limit $f_\delta(x)$ defined by (\ref{lim}) exists for any $%
\delta\in\Delta$ (see(\ref{Delta})). For the regular point $x$ we denote%
\begin{equation}
f^{\ast }\left(x\right) :=\frac{1}{2^{d}}\sum_ {\delta\in\Delta}
f_\delta\left(x\right) .  \label{limit}
\end{equation}

\begin{definition}
We say that the class of functions $V \subset L^{1}(T^{d})$ is a class of
convergence on $T^{d}$, if for any function $f\in V $

1) the Fourier series of $f$ converges to $f^{\ast }({x})$ at any regular
point ${x}\in T^{d}$,

2) the convergence is uniform on a compact $K\subset T^{d}$, if $f$ is
continuous on $K$.
\end{definition}

The well known Dirichlet-Jordan theorem (see \cite{Zy}) states that the
Fourier series of a function $f(x), \ x\in T$ of bounded variation converges
at every point $x$ to the value $\left[ f\left( x+0\right)
+f\left(x-0\right) \right] /2$. If $f$ is in addition continuous on $T$,
then the Fourier series converges uniformly on $T$.

Hardy \cite{Ha} generalized the Dirichlet-Jordan theorem to the double
Fourier series and proved that $BV$ is a class of convergence on $T^{2}$.

The following theorem was proved by Waterman (for $d=1$) and Sahakian (for $%
d=2$).

\begin{Sah}[Waterman \protect\cite{W}, Sahakian \protect\cite{Saha}]
If $d=1$ or $d=2$, then the class $HBV\left( T^{d}\right) $ is a class of
convergence on $T^{d}$.
\end{Sah}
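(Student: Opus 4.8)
The plan is to measure the deviation of the rectangular partial sum from the symmetric value $f^{*}(x)$ by writing it as an integral of a symmetrized increment of $f$ against a product of Dirichlet kernels, and then to convert the oscillation of the kernels, via summation by parts, into harmonically weighted sums of increments that are exactly controlled by the harmonic variation. I would settle $d=1$ first, since the case $d=2$ reuses the one-dimensional mechanism on the one-variable sections and adds a genuinely two-dimensional term.

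For $d=1$ I would start from
\[
S_N f(x)-f^{*}(x)=\frac{1}{\pi}\int_0^\pi \psi_x(t)\,D_N(t)\,dt,\qquad
\psi_x(t):=f(x+t)+f(x-t)-2f^{*}(x),
\]
where $D_N$ is the Dirichlet kernel and $\psi_x(t)\to 0$ as $t\to 0^{+}$ because the one-sided limits defining $f^{*}(x)$ exist. Splitting at $t=\pi/N$, the bound $|D_N|\le cN$ and the smallness of $\psi_x$ near the origin dispose of $[0,\pi/N]$. On $[\pi/N,\pi]$ I would replace $D_N(t)$ by its principal part $\sin((N+\tfrac12)t)/t$ and partition the interval at the zeros $t_k=k\pi/(N+\tfrac12)$ of the numerator; on the $k$-th arc the weight $1/t$ is comparable to $N/(k\pi)$ while $\int|\sin((N+\tfrac12)t)|\,dt$ is comparable to $1/N$, so each arc contributes a term comparable to $1/k$ times the increment of $\psi_x$ across it. Since these arcs form a collection of non-overlapping intervals and the weights are harmonic, summation by parts bounds the resulting sum by $HV(f,T)$; uniformity on a compact set of continuity follows from the uniform equicontinuity of $f$ together with the uniform local smallness of the harmonic variation, the latter obtained by a compactness argument of the same type as in the proof of Theorem \ref{th2}.

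For $d=2$ I would write $S_{N_1,N_2}f(x)-f^{*}(x)$ as a double integral of the fourfold symmetric increment $G_x(t_1,t_2)$, built from $f(x_1\pm t_1,x_2\pm t_2)$ and the corner limits $f_\delta$, against $D_{N_1}(t_1)D_{N_2}(t_2)$, and partition $[0,\pi]^2$ by the thresholds $\pi/N_1$ and $\pi/N_2$ into four regions. It is natural that the three nontrivial terms of $HV(f,T^2)=HV^{\{1\}}+HV^{\{2\}}+HV^{\{1,2\}}$ match the three nontrivial regions: where $t_1$ is small one integrates out the first variable and applies the one-dimensional mechanism in $t_2$ through the partial variation $HV^{\{2\}}$ (and symmetrically $HV^{\{1\}}$ when $t_2$ is small), while in the doubly large region one applies the one-dimensional partition and summation by parts in each variable separately, so that the double sign changes produce the weights $1/(k_1k_2)$ acting on the mixed differences $f(I^1_{k_1}\times I^2_{k_2})$, which are summable against $HV^{\{1,2\}}(f,T^2)$. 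The doubly small region is controlled by the joint smallness of $G_x$ near the origin coming from the existence of all four limits $f_\delta$.

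The main obstacle is to extract genuine convergence to $0$, rather than mere boundedness, out of these harmonically weighted sums: the boundedness by $HV(f)$ is immediate from the definition, but for each $N$ the partition into arcs is different, so one must combine the near-origin smallness of $\psi_x$ (or $G_x$) with the sign alternation of the kernel through a second, more delicate summation by parts, and this is the technical heart of the argument. In the two-dimensional case the difficulty is compounded by Pringsheim convergence: because $N_1$ and $N_2$ tend to infinity independently, the double summation-by-parts estimate on the doubly large region must be uniform in the ratio $N_1/N_2$, and the smallness must be drawn simultaneously from the two variables while the four regions are made to fit together at the thresholds.
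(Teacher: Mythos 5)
Your sketch reproduces the correct classical skeleton---the Dirichlet-kernel representation, the partition of $[\pi/N,\pi]$ at the zeros of $\sin\left(\left(N+\tfrac12\right)t\right)$, and the conversion of the kernel's sign changes into harmonically weighted increments is exactly the mechanism of Waterman \cite{W} and Sahakian \cite{Saha}. (For comparison: the present paper does not prove Theorem WS at all; it cites it, and the only piece of Sahakian's machinery it states is Lemma S, which packages the entire kernel argument into the bound $\left\vert S_{n_{1},\dots,n_{d}}f(x)-f^{\ast}(x)\right\vert \le M(d)\sum_{\delta\in\Delta}HV\left(f;\pi_{\varepsilon\delta}(x)\right)+o(1)$.) The genuine gap is the step you yourself flag as ``the technical heart'' and then do not carry out. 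Bounding the arc sums by $HV(f,T)$ or by $HV^{\{1,2\}}(f,T^{2})$, as your summation by parts does, yields only \emph{boundedness} of $S_{N}f(x)-f^{\ast}(x)$; and the near-origin smallness of $\psi_{x}$ cannot be combined with the harmonic weights in any naive way, because the weights $1/k$ are not summable: infinitely many terms, each individually $o(1)$, can still produce a large sum. What is actually needed---and what, once Lemma S is granted, is the \emph{entire} remaining content of the theorem---is a localization statement: at a regular point, $HV\left(f;\pi_{\varepsilon\delta}(x)\right)\to 0$ as $\varepsilon\to 0$, or a workable substitute such as smallness of the shifted-sequence variation. This is the analogue of the paper's Theorem \ref{th2}; note that the proof of Theorem \ref{th2} exploits the special structure of $\Lambda^{\#}$-variation (each interval $I_{i}$ carries its own base point $x^{i}$, so collections living at different scales can be concatenated without destroying the weights), and no such easy concatenation exists for the ordinary harmonic variation in two variables---supplying it is the real work in \cite{Saha}.

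There is also a structural reason to be certain the gap is fatal as the proposal stands: nothing in your outline uses $d\le 2$. The kernel representation, the $2^{d}$ regions, and the variable-by-variable summation by parts make sense in every dimension, and if they sufficed they would show that $HBV(T^{d})$ is a class of convergence for every $d$. That conclusion is false: as quoted in this very paper, Bakhvalov \cite{Bakh1} proved that $HBV(T^{d})$ is \emph{not} a class of convergence for $d>2$. Hence any correct completion of your argument must invoke the dimension restriction precisely at the point you left open, and the proposal gives no indication of how $d\le 2$ would enter. Until that step is supplied, the proof does not go through.
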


In \cite{Bakh1} Bakhvalov proved that the class $HBV$ is not a class of
convergence on $T^{d}$, if $d>2$. On the other hand, he proved the following
theorem.

\begin{Bakh}[Bakhvalov \protect\cite{Bakh1}]
\label{B} The class $CHV\left( T^{d}\right) $ is a class of convergence on $%
T^{d}$ for any $d=1,2,\ldots $
\end{Bakh}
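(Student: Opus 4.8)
The plan is to reduce the convergence of the Pringsheim rectangular sums to a family of oscillatory integrals and to feed in the hypothesis $f\in CHV(T^{d})$ only through the vanishing of the \emph{tails} of the harmonic variation. First I would express the partial sum as a convolution with the product Dirichlet kernel $D_{N}(t)=\sin((N+\tfrac12)t)/(2\sin(t/2))$ and fold the cube using evenness of $D_{N}$:
\[
S_{N_{1},\dots,N_{d}}f(x)=\frac{1}{(2\pi)^{d}}\sum_{\delta\in\Delta}\int_{[0,\pi]^{d}}f(x_{1}+\delta_{1}t_{1},\dots,x_{d}+\delta_{d}t_{d})\prod_{j=1}^{d}D_{N_{j}}(t_{j})\,dt.
\]
Since $\tfrac{1}{(2\pi)^{d}}\int_{[0,\pi]^{d}}\prod_{j}D_{N_{j}}=2^{-d}$ and $f^{\ast}(x)=2^{-d}\sum_{\delta}f_{\delta}(x)$, subtracting $f^{\ast}(x)$ turns the error into a sum of $2^{d}$ orthant integrals, and by the symmetry of the construction it suffices to treat $\delta=(1,\dots,1)$, i.e. to prove
\[
I_{\mathbf N}:=\int_{[0,\pi]^{d}}\big(f(x+t)-f_{\delta}(x)\big)\prod_{j=1}^{d}D_{N_{j}}(t_{j})\,dt\longrightarrow0
\]
as $\min_{j}N_{j}\to\infty$. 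Writing $\tfrac{1}{2\sin(t_{j}/2)}=\tfrac1{t_{j}}+O(t_{j})$, the bounded smooth remainder is disposed of by a routine Riemann--Lebesgue argument, so the core object is $\int_{[0,\pi]^{d}}g(t)\prod_{j}\tfrac{\sin(N_{j}t_{j})}{t_{j}}\,dt$ with $g(t)=f(x+t)-f_{\delta}(x)$, where $g(t)\to0$ as $t\to0$ inside the orthant, by existence of the limit $f_{\delta}(x)$ at the regular point $x$.

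Next I would partition $[0,\pi]^{d}$ according to which coordinates are small. For a threshold $\eta>0$ and a subset $\alpha\subset D$, consider the region where $t_{j}<\eta$ for $j\in\alpha$ and $t_{j}\ge\eta$ for $j\in\widetilde{\alpha}$. On the coordinates of $\widetilde{\alpha}$ the factor $\sin(N_{j}t_{j})/t_{j}$ oscillates against an integrable weight, so for fixed $\eta$ that piece tends to $0$ by an iterated Riemann--Lebesgue estimate as $\min_{j\in\widetilde{\alpha}}N_{j}\to\infty$. On the coordinates of $\alpha$ the decisive device is Abel summation over the harmonic intervals $[\pi/(n+1),\pi/n]$: partial summation in each small variable produces exactly the mixed differences $f(I^{j_{1}}_{i_{1}}\times\cdots\times I^{j_{p}}_{i_{p}},x_{\widetilde{\alpha}})$ weighted by $(i_{1}\cdots i_{p})^{-1}$, which is precisely the sum defining the harmonic variation $\{H,\dots,H\}V^{\alpha}$. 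Since the summation ranges only over indices $i_{j}\gtrsim1/\eta$, this contribution is dominated by a tail $\{H_{m},H,\dots,H\}V^{\alpha}(f,T^{d})$ with $m\approx1/\eta$.

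Here the hypothesis $f\in CHV(T^{d})$ enters in an essential way: by the definition of continuity in harmonic variation, $\{H_{m},H,\dots,H\}V^{\alpha}(f,T^{d})\to0$ as $m\to\infty$ for every $\alpha$, and since this quantity is a global supremum it is independent of the base point $x$. Hence choosing $\eta$ small pushes every small-coordinate contribution below a prescribed $\varepsilon$, uniformly in $\mathbf N$ and in $x$; with $\eta$ now fixed, the finitely many remaining pieces (those having at least one coordinate $\ge\eta$) tend to $0$ by Riemann--Lebesgue as $\min_{j}N_{j}\to\infty$. This yields $I_{\mathbf N}\to0$, i.e. convergence of the Fourier series to $f^{\ast}(x)$ at every regular point. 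For part~2) of the definition of a class of convergence, the only $x$-dependent ingredient is the smallness of $g(t)$ near $t=0$; when $f$ is continuous on a compact $K$ this holds uniformly in $x\in K$ by uniform continuity (in the spirit of part b) of Theorem~\ref{th2}), while the tail-variation bounds are already uniform, giving uniform convergence on $K$.

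The main obstacle is the organization of the mixed regions: the product structure couples the coordinate directions, and in the region where a set $\alpha$ of coordinates is small one must simultaneously control the variation in those directions and execute Riemann--Lebesgue in the complementary directions, uniformly over the $2^{d}\times2^{d}$ (orthants times subsets) bookkeeping. The deeper point is that finiteness of the harmonic variation alone yields only \emph{boundedness} of $I_{\mathbf N}$, which by Bakhvalov's negative result cannot be improved to convergence when $d>2$; it is exactly the simultaneous vanishing of the harmonic-variation tails in all directions, guaranteed by $CHV$ and not by $HBV$, that converts this boundedness into genuine convergence and makes the theorem hold in every dimension.
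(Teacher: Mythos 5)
The paper itself offers no proof of this statement: it is quoted as a known theorem of Bakhvalov \cite{Bakh1}, and only its conclusion is used for context. So your proposal can only be judged on its own merits. Its architecture is the right one and matches the general shape of the Waterman--Sahakian--Bakhvalov arguments (compare Lemma S quoted in the paper for $d=2$): orthant folding of the product Dirichlet kernel, replacement of $1/(2\sin(t_j/2))$ by $1/t_j$, a decomposition according to which coordinates are small, partial summation producing harmonic-weighted mixed differences, and the $CHV$ hypothesis entering through vanishing tails $\{H_m,H,\dots,H\}V^{\alpha}(f,T^d)\to 0$.

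However, the pivotal step is wrong as stated. You claim that Abel summation over the harmonic intervals $[\pi/(n+1),\pi/n]$ contained in $(0,\eta)$ produces mixed differences with weights $(i_1\cdots i_p)^{-1}$ and indices $i_j\gtrsim 1/\eta$, hence a bound by a tail of the global harmonic variation with $m\approx 1/\eta$. The harmonic partition is not aligned with the oscillation of $\sin(N_jt_j)$: on a block $[\pi/(n+1),\pi/n]$ with $n$ of order $1/\eta$ the kernel oscillates roughly $N_j/n^2$ times, so blockwise Abel summation does not capture the cancellation; if you force it by freezing $g$ on each block, the Abel-summed main term is weighted by the (bounded) sine integral and you end up with the \emph{ordinary}, unweighted variation of $g$, which is useless here. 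The genuine mechanism (Waterman's pairing) uses intervals of length $\pi/N_j$ matched to half-periods of the kernel; the harmonic weight $1/k$ then comes from $1/t_j\approx N_j/(k\pi)$ on the $k$-th such interval, and in the region $t_j<\eta$ these indices run from $k=1$, not from $1/\eta$. Consequently no tail of the global variation is produced by the spatial cutoff $\eta$ alone, and, tellingly, your scheme never uses the regularity of $x$ quantitatively. The correct repair is to split the index range at a large $K$: indices $k\le K$ correspond to intervals within distance $\sim K\pi/N_j$ of $x$ and are controlled by the oscillation of $f$ at the regular point (this is where existence of $f_\delta(x)$ enters, and where uniformity on compacts comes from), while indices $k>K$ --- and likewise the contributions of coordinates with $t_j\ge\eta$, whose indices are $\gtrsim N_j\eta\to\infty$ --- are controlled by the $CHV$ tails.

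This also exposes a second gap: the mixed regions cannot be disposed of by ``iterated Riemann--Lebesgue for fixed $\eta$'', because in those regions the small variables still carry the non-integrable factors $1/t_j$ (whose $L^1$-norms grow like $\log N_j$), so the Riemann--Lebesgue decay in the large variables would have to be uniform over that singular parameter, which you do not establish. In the actual proofs these regions, too, are estimated by shifted-sequence (tail) variations --- i.e.\ it is again $CHV$, not Riemann--Lebesgue, that kills them. Finally, every partial summation leaves boundary terms involving values (not differences) of $g=f-f_\delta(x)$; these are exactly where the smallness of $g$ near the origin must be invoked, and they are absent from your sketch.
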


Convergence of spherical and other partial sums of double Fourier series of
functions of bounded $\Lambda $-variation was investigated in deatails by
Dyachenko \cite{D1,D2}.

In \cite{GogSah,GogSah2} Goginava and Sahakian investigated convergence of
multiple Fourier series of functions of bounded partial $\Lambda$-variation.
In particular, the following theorem was proved.

\begin{Goginava-Sahakian}
\bigskip a) If and $\Lambda =\left\{ \lambda _{n}\right\} _{n=1}^{\infty }$
with
\begin{equation*}
\lambda _{n}=\frac{n}{\log ^{d-1+\varepsilon }(n+1)},\qquad n=1,2,\ldots
,\quad d>1,
\end{equation*}%
for some $\varepsilon >0$, then the class $P\Lambda BV\left( T^{d}\right) $
is a class of convergence on $T^{d}$.\newline
\medskip b) If $\Lambda =\left\{ \lambda _{n}\right\} _{n=1}^{\infty }$ with
\begin{equation*}
\lambda _{n}=\frac{n}{\log ^{d-1}(n+1)},\qquad n=1,2,\ldots ,\quad d>1,
\end{equation*}%
then the class $P\Lambda BV\left( T^{d}\right) $ is not a class of
convergence on $T^{d}$.
\end{Goginava-Sahakian}

In \cite{DW}, Dyachenko and Waterman proved that the class $\Lambda ^{\ast
}BV(T^{2})$ is a class convergence on $T^{2}$ for $\Lambda =\{\lambda _{n}\}$
with $\lambda _{n}=\frac{n}{\ln \left( n+1\right) }$, $n=1,2,\ldots $

The main result of the present paper is the following theorem.

\begin{theorem}
\label{main} a) If $\Lambda =\left\{ \lambda _{n}\right\} _{n=1}^{\infty }$
with
\begin{equation}  \label{Lambdaa}
\lambda _{n}=\frac{n}{\log ^{d-1}(n+1)},\qquad n=1,2,\ldots,\quad d>1,
\end{equation}%
then the class $\Lambda ^{\#}BV\left( T^{d}\right) $ is a class of
convergence on $T^d$.

b)If $\Lambda =\left\{ \lambda _{n}\right\} _{n=1}^{\infty }$ with
\begin{equation}  \label{Lambdab}
\lambda_n :=\left\{ \frac{n\xi _{n}}{\log ^{d-1}\left( n+1\right) }\right\}
,\quad n=1,2,\ldots\quad d>1,
\end{equation}
where $\xi _{n}\to \infty$ as $n\rightarrow \infty$, then there exists a
continuous function $f\in \Lambda ^{\#}BV\left( T^{d}\right) $ such that the
cubical partial sums of $d$-dimensional Fourier series of $f$ diverge
unboundedly at $\left( 0,...,0\right) \in T^{d}$.
\end{theorem}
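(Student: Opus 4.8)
My plan is to prove a) by establishing the inclusion $\Lambda^{\#}BV(T^{d})\subset CHV(T^{d})$ and then invoking Theorem B, and to prove b) by an explicit lacunary construction whose membership is verified through the analogue of Theorem \ref{v(n)}. For a), the Corollary already gives $\Lambda^{\#}BV(T^{d})\subset HBV(T^{d})$, so the entire issue is upgrading \emph{bounded} harmonic variation to \emph{continuity} in harmonic variation (recall $HBV$ itself is not a convergence class for $d>2$). Fix $\alpha=\{j_{1},\dots,j_{p}\}\subset D$ and a coordinate $j_{k}$, and estimate the tail variation $\{H,\dots,H_{n},\dots\}V^{\alpha}(f,T^{d})$ with $H_{n}=\{s\}_{s=n}^{\infty}$ in the $k$-th slot. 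Rearranging the indices and bounding a mixed difference by a single-coordinate difference exactly as in the proof of Theorem \ref{t1}, the summation over the $p-1$ remaining coordinates of $\alpha$ contributes a factor $\log^{p-1}$, whereas the $\Lambda^{\#}$-normalisation still carries the full $\log^{d-1}$. The resulting efficiency ratio of the target weight against the available budget is $i/[(n+i-1)\log^{d-p}(i+1)]$, whose supremum over $i$ is of order $(\log n)^{-(d-p)}$; hence for every \emph{proper} subset $\alpha\subsetneq D$
\[
\{H,\dots,H_{n},\dots\}V^{\alpha}(f,T^{d})\le \frac{M(d)\,\Lambda^{\#}V(f,T^{d})}{\log^{d-p}(n+1)}\longrightarrow 0,
\]
the boundedness of $f$ (which follows from the finiteness of $\Lambda^{\#}V$) absorbing the finitely many low-index terms.

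The real obstacle is the top case $\alpha=D$ ($p=d$), in which the surplus logarithm is exactly cancelled, the efficiency ratio degenerates to $i/(n+i-1)\to1$, and the crude single-coordinate reduction yields no decay whatsoever; a product $f=\prod_{j}g_{j}$ shows that this reduction is simply too lossy, the true mixed difference being far smaller than its bound. Here I will not reduce but keep the full mixed difference and use the continuity established in Theorem \ref{th2}: in a near-extremal collection the high-index intervals are necessarily short, so their total contribution is dominated by $\Lambda^{\#}V(f,Q)$ over small cubes $Q$, and by Theorem \ref{th2}(a),(b) this tends to $0$ at every regular point and uniformly on any compact on which $f$ is continuous. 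That is exactly conditions 1) and 2) in the definition of a convergence class, so together with the proper-subset estimate it yields $f\in CHV(T^{d})$ and Theorem B closes part a). I expect this localisation of the full-variation tail to be the decisive technical step.

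For b) I will construct $f(x)=\sum_{m}a_{m}\Phi_{m}(x)$ as a superposition of blocks $\Phi_{m}$ concentrated at a lacunary frequency scale $N_{m}$ and arranged symmetrically in all $d$ variables so that the product Dirichlet kernel $\prod_{j}D_{N_{m}}(t_{j})$ resonates at the origin. The scales $N_{m}$ and amplitudes $a_{m}$ are chosen so that the cubical sums $S_{N_{m},\dots,N_{m}}f(0)$ grow without bound, the lower bound coming from the usual logarithmic growth of the integral of $f$ against the product kernel, while membership $f\in\Lambda^{\#}BV(T^{d})$ is guaranteed by the version of Theorem \ref{v(n)} appropriate to the present $\Lambda$, namely $\sum_{n}v_{s}^{\#}(f,n)\log^{d-1}(n+1)/(n^{2}\xi_{n})<\infty$. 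Divergence of this series for $\xi_{n}\equiv1$ is precisely what makes $n/\log^{d-1}(n+1)$ a convergence class in a); the assumption $\xi_{n}\to\infty$ opens exactly the gap that keeps the (now lighter) membership series convergent while the partial sums still blow up. I expect the delicate point to be the simultaneous calibration of $a_{m}$ and $N_{m}$: the amplitudes must be large enough to force unbounded divergence, yet small enough that the extra factor $\xi_{n}^{-1}$ makes the membership series summable.
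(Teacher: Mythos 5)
Your treatment of the proper subsets $\alpha\subsetneq D$ is sound: the efficiency-ratio computation is exactly what the single-coordinate reduction of Theorem~\ref{t1} yields, with decay of order $(\log n)^{-(d-p)}$. But the proof collapses at the step you yourself call decisive, the case $\alpha=D$. Membership of $f$ in $CHV(T^{d})$ is a \emph{global, unconditional} requirement: the tail variation $\left\{H_{n},H,\ldots,H\right\}V^{D}(f,T^{d})$ is a supremum over all grids of nonoverlapping intervals in $T^{d}$, ordered arbitrarily. To survive the shifted weight $1/(n+i-1)$ such a grid must indeed use many intervals in the distinguished direction, and most of those are then short; but they can be spread uniformly over $T$ rather than clustered, their cross-sections in the other directions can be long, and nothing places them near regular points of $f$. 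Theorem~\ref{th2} controls $\Lambda^{\#}V(f,Q)$ for \emph{one} small cube $Q$ centered at a regular point (or on a compact where $f$ is continuous); it says nothing about the sum of contributions of many pairwise disjoint small cells spread over $T^{d}$ (individual smallness does not sum), nor about cells near non-regular points, while $f\in CHV$ would have to hold for every $f\in\Lambda^{\#}BV$ whatever its set of regular points is. Your closing inference --- ``that is exactly conditions 1) and 2) in the definition of a convergence class, so it yields $f\in CHV$'' --- also runs the implication backwards: those conditions are what Theorem B gives you \emph{after} $f\in CHV$ is proved; they are not a criterion for it. So the inclusion $\Lambda^{\#}BV(T^{d})\subset CHV(T^{d})$, on which your whole part a) rests, is left unproved, and your mechanism for proving it cannot work. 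The paper never claims this inclusion: instead of any global tail condition it uses Sahakian's local estimate (Lemma S), which bounds $|S_{n_{1},\ldots,n_{d}}f(x)-f^{\ast}(x)|$ by $M(d)\sum_{\delta\in\Delta}HV\left(f;\pi_{\varepsilon\delta}(x)\right)+o(1)$; these are variations over small boxes at $x$, which Theorem~\ref{t1} (applied on those boxes) converts to $\Lambda^{\#}V\left(f;\pi_{\varepsilon\delta}(x)\right)$, and Theorem~\ref{th2} then kills them. Only local smallness of variation is ever needed --- precisely what Theorem~\ref{th2} can deliver.

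In part b) your lacunary-superposition plan is a legitimate alternative in principle (condensation of singularities instead of a uniform-boundedness argument), but as written it contains none of the actual content: the blocks $\Phi_{m}$ are unspecified, whereas the whole proof lies in exhibiting them --- the paper takes $g_{N}$ equal to $\prod_{s}\sin(N+1/2)x_{s}$ restricted to the grid of cells $A_{i_{1},\ldots,i_{d}}$ --- and in verifying the two competing estimates $\Vert g_{N}\Vert_{\Lambda^{\#}BV}=o(\log^{d}N)$ (this is exactly where $\xi_{n}\to\infty$ enters) against $S_{N,\ldots,N}g_{N}(0,\ldots,0)\geq c\,\eta_{N}^{-1}\cdot$(norm). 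Moreover, an explicit sum $\sum_{m}a_{m}\Phi_{m}$ obliges you to control the cross terms $S_{N_{k},\ldots,N_{k}}\bigl(\sum_{l\neq k}a_{l}\Phi_{l}\bigr)(0,\ldots,0)$, a genuine extra difficulty that the paper bypasses entirely by normalizing $f_{N}=g_{N}/(\eta_{N}\log^{d}N)$, so that $\sup_{N}\Vert f_{N}\Vert_{\Lambda^{\#}BV}<\infty$ while $S_{N,\ldots,N}f_{N}(0,\ldots,0)\to\infty$, and then invoking the Banach--Steinhaus theorem in the Banach space $C(T^{d})\cap\Lambda^{\#}BV(T^{d})$. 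Finally, note that Theorem~\ref{v(n)} is stated only for $\lambda_{n}=n/\log^{d-1}(n+1)$; the variant with the extra factor $\xi_{n}$ that you appeal to is plausible but would itself require proof.
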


\begin{proof}[Proof of Theorem \protect\ref{main}]
The proof of the part a) is based on the following statement, that in the
case $d=2$ is proved by Sahakian (see formulaes (33) and (35) in \cite{Saha}%
). For an arbitrary $d>2$ the proof is similar.

\begin{Sahakian}
Suppose $f\in HV\left( T^{d}\right)$ and $x\in T^d$. If the limit $%
f_\delta(x)$ exists for any $\delta\in \Delta$, then for any $\varepsilon >0$
\begin{equation*}
\left\vert S_{n_{1},...,n_{d}}f\left( x\right) -f^\ast\left( x\right)
\right\vert \leq M\left( d\right) \sum_{\delta \in \Delta}HV\left(
f;\pi_{\varepsilon\delta}(x) \right) +o\left( 1\right),
\end{equation*}
as $n_i\to\infty,\ i=1,2,\ldots,d$.

Moreover, the quantity $o(1)$ tends to $0$ uniformly on a compact $K$, if $f$
is continuous on $K$.
\end{Sahakian}

Now, if the sequence $\Lambda=\{\lambda_n\}$ is defined by (\ref{Lambdaa})
and $f\in \Lambda^{\#}BV\left( T^{d}\right)$, then Lemma S and Theorem \ref%
{t1} imply that for any $\varepsilon >0$
\begin{equation}  \label{ps}
\left\vert S_{n_{1},...,n_{d}}f\left( x\right) -f^\ast\left( x\right)
\right\vert \leq M\left( d\right) \sum_{\delta \in \Delta}\Lambda
^{\#}V\left( f;\pi_{\varepsilon\delta} (x)\right) +o\left(1\right),
\end{equation}
which combined with Theorem \ref{th2} completes the proof of a).

To prove part b) suppose that $\Lambda =\{\lambda _{n}\}$ is a sequence
defined by (\ref{Lambdab}). It is not hard to see that the class $%
C(T^{d})\cap \Lambda ^{\#}BV\left(T^{d}\right) $ is a Banach space with the
norm
\begin{equation*}
\Vert f\Vert _{\Lambda ^{\#}BV}:=\Vert f\Vert _{C}+\Lambda ^{\#}BV(f).
\end{equation*}
Denoting
\begin{equation*}
A_{i_{1},\ldots ,i_{d}}:=\left[ \frac{\pi i_{1}}{N+1/2},\frac{\pi \left(
i_{1}+1\right) }{N+1/2}\right) \times \cdots \times \left[ \frac{\pi i_{d}}{%
N+1/2},\frac{\pi \left( i_{d}+1\right) }{N+1/2}\right),
\end{equation*}%
we consider the following functions
\begin{equation*}
g_{N}\left( x_{1},\ldots ,x_{d}\right)
:=\sum\limits_{i_{1},...,i_{d}=1}^{N-1}1_{A_{i_{1},\ldots ,i_{d}}}\left(
x_{1},\ldots ,x_{d}\right) \prod\limits_{s=1}^{d}\sin \left( N+1/2\right)
x_{s},
\end{equation*}%
for $N=2,3,\ldots$, where $1_{A}\left( x_{1},\ldots ,x_{d}\right) $ is the
characteristic function of a set $A\subset T^{d}$.

It is easy to check that%
\begin{equation*}
\left\{ \frac{n\xi _{n}}{\log ^{d-1}\left( n+1\right) }\right\}
^{\#}V_{s}\left( g_{N}\right) \leq c\sum\limits_{i=1}^{N-1}\frac{\log
^{d-1}\left( i+1\right) }{i\xi _{i}}=o\left( \log ^{d}N\right)
\end{equation*}%
and hence
\begin{equation*}
\left\Vert g_{N}\right\Vert _{\Lambda ^{\#}BV}=o\left( \log ^{d}N\right)
=\eta _{N}\log ^{d}N,
\end{equation*}%
where $\eta _{N}\rightarrow 0$ as $N\rightarrow \infty $. Now, setting
\begin{equation*}
f_{N}:=\frac{g_{N}}{\eta _{N}\log ^{d}N},\quad N=2,3,\ldots ,
\end{equation*}%
we obtain that $f_{N}\in \Lambda ^{\#}BV\left( T^{d}\right) $ and
\begin{equation}
\sup\limits_{N}\left\Vert f_{N}\right\Vert _{\Lambda ^{\#}BV}<\infty .
\label{norm}
\end{equation}%
Now, for the cubical partial sums of the d-dimensional Fourier series of $%
f_{N}$ at $\left( 0,...,0\right) \in T^{d}$ we have that
\begin{eqnarray}
&&\pi ^{d}S_{N,\cdots ,N}f_{N}\left( 0,\cdots ,0\right)  \label{low} \\
&=&\frac{1}{\eta _{N}\log ^{d}N}\sum\limits_{i_{1},...,i_{d}=1}^{N-1}\int%
\limits_{A_{i_{1},\cdots ,\,i_{d}}}\prod\limits_{s=1}^{d}\frac{\sin
^{2}\left( N+1/2\right) x_{s}}{2\sin \left( x_{s}/2\right) }dx_{1}\cdots
dx_{d}  \notag \\
&\geq &\frac{c}{\eta _{N}\log ^{d}N}\sum\limits_{i_{1},...,i_{d}=1}^{N-1}%
\frac{1}{i_{1}\cdots i_{d}}\geq \frac{c}{\eta _{N}}\rightarrow \infty  \notag
\end{eqnarray}%
as $N\rightarrow \infty $. Applying the Banach-Steinhaus Theorem, from (\ref%
{norm}) and (\ref{low}) we conclude that there exists a continuous function $%
f\in \Lambda ^{\#}BV\left( T^{d}\right) $ such that
\begin{equation*}
\sup_{N}|S_{N,\cdots ,N}f(0,\cdots ,0)|=\infty .
\end{equation*}%
Theorem \ref{main} is proved.
\end{proof}

The next theorem follows from Theorems \ref{v(n)} and \ref{main}.

\begin{theorem}
For any $d>1$ the class of functions $f(x),\ x\in T^d$ satisfying the
following condition
\begin{equation*}
\sum\limits_{n=1}^{\infty }\frac{v_{s}^{\#}\left( f,n\right) \log
^{d-1}\left( n+1\right) }{n^{2}}<\infty ,~\ \ s=1,...,d,
\end{equation*}
is a class of convergence.
\end{theorem}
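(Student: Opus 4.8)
The plan is to recognize this statement as a direct corollary of the two preceding theorems, so that the entire argument reduces to chaining them together. First I would fix the sequence $\Lambda=\{\lambda_n\}$ with $\lambda_n=n/\log^{d-1}(n+1)$, $n=1,2,\ldots$, which is precisely the sequence appearing both in Theorem \ref{v(n)} and in part a) of Theorem \ref{main}.

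The first step is to invoke Theorem \ref{v(n)}. Since by hypothesis $f$ satisfies
\begin{equation*}
\sum_{n=1}^{\infty}\frac{v_s^{\#}(f,n)\log^{d-1}(n+1)}{n^2}<\infty,\qquad s=1,\ldots,d,
\end{equation*}
that theorem gives immediately $f\in\Lambda^{\#}BV(T^d)$ for the sequence $\Lambda$ fixed above. This is the key inclusion that converts the summability hypothesis into membership in the generalized variation class. Note that $f\in\Lambda^{\#}BV(T^d)$ in particular forces $f$ to be bounded, hence $f\in L^1(T^d)$, so the ambient requirement in the definition of a class of convergence is automatically met.

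The second step is to apply part a) of Theorem \ref{main}, which asserts that for exactly this choice of $\Lambda$ the class $\Lambda^{\#}BV(T^d)$ is a class of convergence on $T^d$. By definition this means that for every $g\in\Lambda^{\#}BV(T^d)$ the Fourier series converges to $g^{\ast}(x)$ at every regular point $x\in T^d$, and uniformly on any compact set on which $g$ is continuous. Combining the two steps, every $f$ satisfying the summability condition lies in $\Lambda^{\#}BV(T^d)$ and therefore enjoys both convergence properties.

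I do not anticipate any genuine obstacle, since the whole argument is a composition of inclusions. The only point requiring a moment's care is the elementary but essential observation that a subclass of a class of convergence is again a class of convergence: the notion of ``class of convergence'' is defined by a condition quantified universally over the functions of the class, so if that condition holds for all functions of $\Lambda^{\#}BV(T^d)$, it holds in particular for the subfamily singled out by the summability hypothesis. This yields the theorem.
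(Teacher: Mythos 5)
Your proposal is correct and is exactly the paper's argument: the authors state that this theorem follows from Theorem \ref{v(n)} (which converts the summability hypothesis into membership in $\Lambda^{\#}BV(T^d)$ for $\lambda_n = n/\log^{d-1}(n+1)$) combined with Theorem \ref{main}, part a) (which says that this $\Lambda^{\#}BV(T^d)$ is a class of convergence). Your additional remark that a subclass of a class of convergence is again a class of convergence is the implicit glue in the paper as well, so the two proofs coincide.
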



\begin{thebibliography}{99}
\bibitem{Bakh1} Bakhvalov, A. N. Continuity in $\Lambda $-variation of
functions of several variables and the convergence of multiple Fourier
series (Russian). \textit{Mat. Sb.} 193, 12(2002), 3--20; English transl. in
\textit{Sb. Math.} 193, 11-12(2002), no. 11-12, 1731--1748.

\bibitem{Ch} Chanturia, Z. A. The modulus of variation of a function and its
application in the theory of Fourier series, \textit{Soviet. Math. Dokl.} 15
(1974), 67-71.

\bibitem{D1} Dyachenko M. I. Waterman classes and spherical partial sums of
double Fourier series, Anal. Math. 21(1995), 3-21

\bibitem{D2} Dyachenko M. I. Two-dimensional Waterman classes and $u$%
-convergence of Fourier series (Russian).\textit{Mat. Sb.} 190 (1999), no.7,
23--40; English transl. in \textit{Sb. Math.} 190 (1999), no.7-8, 955--972.

\bibitem{DW} Dyachenko M. I, Waterman D. Convergence of double Fourier
series and W-classes, Trans. Amer. Math. Soc. 357 (2005), 397-407.

\bibitem{GoEJA} Goginava U. On the uniform convergence of multiple
trigonometric Fourier series. \textit{East J. Approx.} 3 (1999), no.5,
253-266.

\bibitem{GoJAT} Goginava U. Uniform convergence of Ces\'{a}ro means of
negative order of double Walsh-Fourier series. \textit{J. Approx. Theory. } {%
124} (2003), 96-108.

\bibitem{GogSah} Goginava U, Sahakian A. On the convergence of double
Fourier series of functions of bounded partial generalized variation.
\textit{East J. Approx.} 16 (2010), no.2, 109-121.

\bibitem{GogSah2} Goginava U, Sahakian A.On the convergence of multiple
Fourier series of functions of bounded partial generalized variation, Anal.
Math. (to appear).

\bibitem{Ha} Hardy G. H. On double Fourier series and especially which
represent the double zeta function with real and incommensurable parameters.
\textit{Quart. J. Math. Oxford Ser.} 37 (1906), 53-79.

\bibitem{Jo} Jordan C. Sur la series de Fourier. \textit{C.R. Acad. Sci.
Paris}. {92}(1881), 228-230.

\bibitem{M} Marcinkiewicz J. On a class of functions and their Fourier
series. \textit{Compt. Rend. Soc. Sci. Warsowie}, 26 (1934), 71-77.

\bibitem{Sab} Sablin A. I. $\Lambda $-variation and Fourier series
(Russian), \textit{Izv. Vyssh. Uchebn. Zaved. Mat.} 10 (1987), 66--68;
English transl. in \textit{\ Soviet Math. (Izv. VUZ)} 31 (1987).

\bibitem{Saha} Sahakian A. A. On the convergence of double Fourier series of
functions of bounded harmonic variation (Russian). \textit{\ Izv. Akad. Nauk
Armyan. SSR Ser. Mat.} 21 (1986), no.6, 517-529; English transl. in, \textit{%
\ Soviet J. Contemp. Math. Anal.} 21 (1986), no.6, 1-13.

\bibitem{W} Waterman D. On convergence of Fourier series of functions of
generalized bounded variation. \textit{Studia Math.}, 44 (1972), no.1,
107-117.

\bibitem{WaCesaro} Waterman D. On the summability of Fourier series of
functions of $\Lambda $-bounded variation. Studia Math. 54 (1975/76), no. 1,
87--95.

\bibitem{Wi} Wiener N. The quadratic variation of a function and its Fourier
coefficients. \textit{Massachusetts J. Math}., 3 (1924), 72-94.

\bibitem{Zy} Zygmund A. Trigonometric series. Cambridge University Press,
Cambridge, 1959.
\end{thebibliography}
\end{document}